\newtheorem{theorem}{Theorem}[section]
\newtheorem{proposition}[theorem]{Proposition}
\theoremstyle{definition}
\newtheorem{example}{Example}
\theoremstyle{remark}
\newtheorem*{remark}{Remark}
\numberwithin{equation}{section}
\newcommand{\R}{\ensuremath{\mathbb{R}}}
\newcommand{\E}{\ensuremath{\mathbb{E}}}
\newcommand{\var}{\mbox{Var}}
\newcommand{\cov}{\mbox{Cov}}
\newcounter{noteTabCap} 
\title{Estimation of quantile oriented sensitivity indices}
\author{V\'eronique Maume-Deschamps}
\address{Universit\'e de Lyon, Universit\'e Lyon 1, Institut Camille Jordan ICJ UMR 5208 CNRS}
\email{veronique.maume@univ-lyon1.fr}
\author{Ibrahima Niang}
\address{Altran Research}
\email{ibrahima.niang@altran.com}
\keywords{Quantile oriented sensitivity analysis}
\begin{document}
\selectlanguage{english}

\maketitle

%%%%%%%%%%%%%%%%%%%%%%%%%%%%%%%%%%%%%%%%%%%%%%%
%  Abstract
%%%%%%%%%%%%%%%%%%%%%%%%%%%%%%%%%%%%%%%%%%%%%%%

\begin{abstract}
The paper concerns quantile oriented sensitivity analysis. We rewrite the corresponding indices using the Conditional Tail Expectation risk measure. Then, we use this new expression to built estimators.
\end{abstract}

%\keywords{Quantile oriented sensitivity analysis}

%%%%%%%%%%%%%%%%%%%%%%%%%%%%%%%%%%%%%%%%%%%%%%%
%  Body

\section{Introduction}
Many models encountered in applied sciences involve input parameters which are often not precisely known. Some of the input variables may strongly affect the output, while others have a small effect. Sensitivity analysis aims at measuring the impact of each input parameter uncertainty on the model output and, more specifically, to identify the most sensitive parameters (or groups of parameters). One of the common metrics to evaluate the sensitivity is the Sobol index. %Sobol indices are  used to assess the sensitivity of a model output with respect to model input. %It is well known that Sobol indices are well suited to quantify the sensitivity of an estimator based on a variance criterion. 
More precisely, given two random variables $X$ and $Y$, the $X$-Sobol index on $Y$  compares the total variance of $Y$ to the expected variance of the variable $Y$ conditioned by $X$, i.e:
\begin{equation}
\label{Sobol_indivi}
S_X= \frac{\var(\E( Y\ |\ X))}{\var(Y)}.
\end{equation}
%Equation (\ref{Sobol_indivi}) is equivalent to
%\begin{equation}
%\label{Sobol_indivii}
%S_X = \frac{\var(Y)-\E((\var Y \ |\ X))}{\var(Y)} .
%\end{equation} 
It rewrites $S_X=1- \frac{\displaystyle\E((\var Y \ |\ X))}{\var(Y)}$. It is a statistical measure of the relative impact of $X$ on the variability of $Y$; the most sensitive parameters can be identified and ranked as the parameters with the largest Sobol indices.
Introduced in \cite{fort2013new}, contrast indices named "Goal Oriented Sensitivity Analysis" generalize the Sobol ones. The construction of these indices is based on contrast functions.
%The approach of  the authors is to extend formula (\ref{Sobol_indivii}) by using contrast functions. 
Roughly speaking, given  $\psi$ a contrast function and $X, Y$ two random variables, the $\psi$-$X$ contrast index on $Y$ is given by (see  \cite{fort2013new}):
%\begin{equation}
%\label{res}
%S_{X}^{\psi}=\frac{\underset{\theta }{\operatorname{min}}\E \psi(Y;\theta)-\E(\underset{\theta }{\operatorname{min}}\E(\psi(Y;\theta)|X))}{\min_{\theta}\E \psi(Y;\theta)}
%\end{equation}
%where $\theta \in \mathbb{R}$.\\
%where $$V_i=\underset{\theta }{\operatorname{min}}\Psi(\theta)-\E(\underset{\theta }{\operatorname{min}}\E(\psi(Y;\theta)|X_i))$$
%\end{defi}
\begin{equation*}
%\label{res}
S_{X}^{\psi}=\frac{\underset{\theta \in \mathbb{R} }{\operatorname{min}}\left(\E[ \psi(Y;\theta)]\right)-\E(\underset{\theta \in \mathbb{R} }{\operatorname{min}}\left[\E(\psi(Y;\theta)|X)\right])}{\underset{\theta \in \mathbb{R} }{\operatorname{min}}\left(\E [\psi(Y;\theta)]\right)-\E\left(\underset{\theta \in \mathbb{R} }{\operatorname{min}}\psi(Y;\theta)\right)}.
\end{equation*}
$S_{X}^{\psi}$ represents a statistical indicator of the impact of $X$ on the variability of the output function $Y$ with respect to the argmin of a contrast function. If we consider the mean-constrat function $\psi: (y,\theta) \mapsto (y-\theta)^{2}$, then we retrieve the first order Sobol indices defined in (\ref{Sobol_indivi}) and it measures the impact of $X$ to the deviation of $Y$ with respect to the mean. %Once the contrast indices have been defined, the question of their effective computation, their links with the stochastic orders theory and their estimation remain open. 
There exists a pretty large literature in the case where the contrast function is given by the mean-constrat function $\psi: (y,\theta) \mapsto (y-\theta)^{2}$ (see for example \cite{sobol2001global}, \cite{borgonovo2006measuring}, \cite{borgonovo2007quantification}, \cite{saltelli2008global} or \cite{janon2012analyse}).\\
In this  paper, we focus on contrast indices obtained with the $\alpha$-quantile contrast function, i.e., with the contrast function given by:
\begin{equation}
\label{quan_index_1}
\psi_{\alpha}: (y,\theta) \mapsto (y-\theta)(\alpha-\mathbf{1}_{y\leq \theta}), \ \ \alpha \in ]0,1[
\end{equation}
We call quantile contrast index the sensitivity measure that one obtain with the contrast function given by (\ref{quan_index_1}). This setting is also called {\em quantile oriented sensitivity analysis (QOSA)} (see \cite{ioss}). In this work, we show that quantile contrast indices can be linked with the Conditional Tail Expectation (or CTE) risk measure. %We analyse on the other side, in which circumstances an increase of parameter uncertainty in the sense of dispersive or excess wealth orders implies and increase of contrast quantile index.\\ 
Then, from this expression, we propose an estimation method based on Monte Carlo sampling techniques. Note that this approach is the first formal estimation of the quantiles oriented sensitivity indices. It appears firstly in \cite{ibrahima_these}. Nevertheless, during the writing of this article, we have been informed of a recent work (\cite{ioss}) where another procedure is proposed. 
\\
%As for Sobol index, we 
%The goal of this work is to study first the consistency of $\alpha$- quantile contrast indices with respect to the excess wealth and dispersif orders. 
%We also propose a method of estimation of those indices using a finite sample of evaluations of model outputs.\\\\
%As the question of estimation of contrast indices is open, we 
%Our goal is to establish the link between such indices and some classical risk measures.\\\
\ \\
The paper is organized as follows. In Section \ref{sec:def}, we recall basic definitions on the CTE and the contrast indices. In Section \ref{sec:link_cte}, we show that quantile contrast may be rewritten in terms of the CTE. Finally, Section \ref{sec:asym_norm}, is devoted to the estimation of quantile contrast indices and to some simulations.
% In Section \ref{sec:Sobol}, we state  our may results concerning the accordance of Rachdi indices with respect to the dispersive order. Finally, in Section \ref{sec:examples} we provide some examples of illustrations.  In Section \ref{sec:concl}, we give some concluding remarks.

%As in \cite{niang_vero}, the aim of this work is to prove that Rachdi indices behave coherently with respect to stochastic ordering theory. We focus on the indices that on obtain when considering the $\alpha$-quantile %contrast
%$$\psi: (y,\theta) \mapsto (y-\theta)(\alpha-\mathbf{1}_{y\leq \theta})$$
%The paper is organized as follows. In 
\section{Main definitions}\label{sec:def}
Let us consider a measurable function $f$ and a random vector $X=(X_1,\ldots,X_k) \in \mathbb{R}^{k}$, $k \geq 1$. Let $Y$ be the real-valued response variable: $Y=f(X)$.
%$$\begin{array}{ccccc}
%Y & : & (\mathbb{R}^{k}, \mathcal{B}(\mathbb{R}^{k}), P_X)& \to & (\mathbb{R}, \mathcal{B}(\mathbb{R}))\\
% & & X & \mapsto & f(X)\\
%\end{array}$$
%where the joint distribution of $X$ is denoted by $P_X$.
We assume that the random input variables $X_i, i \geq 1$ are independent, which is the standard setting of sensitivity analysis.\\
%The output function $Y$ is supposed to be define in some probability space $(\Omega, \mathcal{F}, \mathbb{Q})$. 
For a random variable $Z$, $F_Z$ denotes its distribution function, and $F_{Z}^{-1}$ the generalized inverse of $F_Z$ (or the quantile function). We recall that for a continuous random variable $Z$ with moment of order $1$, the Conditional Tail Expectation denoted CTE of $Z$ at level $\alpha \in ]0, 1[$ is given by:
\begin{equation}
CTE_{\alpha}(Z)=\E\left[Z|Z>F^{-1}_{Z}(\alpha)\right]=\frac{1}{1-\alpha}\int_{\alpha}^{1}F_{Z}^{-1}(u)du.
\end{equation}
%=\frac{1}{1-\alpha}\E\left(Z\mathbf{1}_{Z>F^{-1}_{Z}(\alpha)}\right)
From an actuarial point of view, the  CTE measures the average of losses given that a specified confidence level $\alpha$ is exceed. We refer the reader to \cite{denuit2006actuarial} for a detailed review.
In what follows, we introduce some concepts about contrast functions and contrast index. The reader is referred for instance to \cite{fort2013new}. We assume that the output $Y=f(X)$ is a continuous random variable. 
Given a function $\psi$
$$\begin{array}{ccccc}
\psi & : & \mathbb{R} \times \mathbb{R} & \to & \mathbb{R}^+ \\
 & & (y, \theta) & \mapsto & \psi(y,\theta),\\
\end{array}$$
such that there is a unique minimum  $\theta^{*} \in \mathbb{R}$ to $ \E[ \psi(Y;\theta)]$.
%$$\theta^{*}=\underset{\theta \in \mathcal{F}}{\operatorname{Argmin}} \E \psi(Y;\theta)$$ 
%The function $\Psi: \theta \rightarrow \E \psi(Y;\theta) $ is called $\psi$-risk.
%One can find a more general version of Definition \ref{constrast} in \cite{fort2013new}.\\
%\begin{remark}
%Note that the definition of contrast function that we presented . For a more general version of 
%\end{remark}
%Some example of contraste function are listed below. 
Some examples of contrast functions that allow to estimate various parameters associated to a probability distribution are listed on Table \ref{Table:Levy}. 
\begin{table}[h]	
\begin{center}
{\renewcommand{\arraystretch}{1.8}
\begin{tabular}{|l|l|l|}
		%\hline
\cline{2-3} \multicolumn{1}{c|}{}  & Contrast functions  & $\theta^{*}$ \\
		\hline
		 The mean &  $\psi(y,\theta)=|y-\theta|^2$  &  $\theta^{*}=\E(Y)$  \\

		\hline
		 The median &  $\psi(y,\theta)=|y-\theta|$ & $\theta^{*}=F_{Y}^{-1}(\frac{1}{2})$   \\
		 \hline
		The $\alpha$-quantile &  $\psi(y,\theta)=(y-\theta)(\alpha-\mathbf{1}_{y\leq \theta})$ \ & $\theta^{*}=F_{Y}^{-1}(\alpha)$   \\
		 \hline
		\end{tabular}
		}
\end{center}
\caption{Contrast functions exemples}
\label{Table:Levy}	
\end{table}

%\begin{itemize}
%\item  [$\bullet$] The mean:\ \ $\psi(y,\theta)=|y-\theta|^2$ $\Rightarrow \theta^{*}=\E(Y)$;
%\item [$\bullet$] The median:\ \ $\psi(y,\theta)=|y-\theta|$ $\Rightarrow \theta^{*}=F_{Y}^{-1}(\frac{1}{2})$;
%\item  [$\bullet$] The $\alpha$-quantile:\ \ $\psi(y,\theta)=(y-\theta)(\alpha-\mathbf{1}_{y\leq \theta})$ $\Rightarrow \theta^{*}=F_{Y}^{-1}(\alpha)$.
%\end{itemize}
%The mean of the random variable $Y$ minimizes $\Psi_{mean}$, the quantile of $Y$ at level $\alpha$ minimizes $\Psi_{quant}$, etc. 
For a more exhaustive list of contrast functions, the reader is referred for instance to \cite{fort2013new}.\\ %Let us consider $\psi$  a contrast function, thus the $\psi$-index of the variable $Y=f(X_1,\ldots,X_k)$ with respect to the contrast $\psi$ and the variable $X_i, i \in \left\{ 1, \ldots, k\right\}$ is defined as:
In what follows, we focus on $\alpha$- quantile contrast function, i.e the contrast function given by:
\begin{equation}
\label{cont_eq}
\psi_{\alpha}: (y,\theta) \mapsto (y-\theta)(\alpha-\mathbf{1}_{y\leq \theta}), \ \ \alpha \in ]0, 1[
\end{equation}
The quantile oriented index associated to the input $X_i$ is:
\begin{equation}\label{eq:qosa}
S_{X_i}^\alpha = \frac{\displaystyle \min_{\theta\in\R} (\E[\psi_\alpha(Y\/;\theta)])-\E(\min_{\theta\in\R} \E[\psi_\alpha(Y\/;\theta)|X_i])}{\displaystyle \min_{\theta\in\R} (\E[\psi_\alpha(Y\/;\theta)])}
\end{equation}
\begin{remark}
It is straightforward to see that if $Y$ and $X_i$ are independent then $S_{X_i}^\alpha=0$ and if $Y$ is $X_i$-measurable then $S_{X_i}^\alpha=1$, which is an expected behavior for sensitivity indices.
\end{remark}
We are interested on how one can express the quantile contrast index in terms of the Conditional Tail Expectation (CTE) risk measure.
%, obtained for $\alpha$- quantile contrast function as a function of Conditional Tail Expectation (CTE) risk measure.
%%%%%%%%%%%%%%%%% 
\section{Relation between contrast indices and Conditional Tail Expectation (CTE) risk measure}\label{sec:link_cte}
We recall that, for $X$ and $Y$ two random variables, the conditional cumulative distribution $F^{}_{Y|X}$ and the conditional quantile $F^{-1}_{Y|X}$ at level $\alpha$ are defined respectively as:
%we denote by $F^{}_{Y|X=x}$ the conditional cumulative distribution $F^{}_{Y|X=x}$ function
$$F^{}_{Y|X}(y)=\mathbb{P}\left( Y\leq y|X\right), \ \ \ y \in \mathbb{R}.$$
%and the conditional quantile
\begin{equation}
F^{-1}_{Y|X}(\alpha)=\inf \left\{y \in \mathbb{R}|\ F^{}_{Y|X}(y)\geq \alpha \right\}.
\end{equation}
These are $X$-measurable random functions.
%In particular $F^{-1}_{Y|X}$ is a random variable more precisely a measurable function of $X$.\\
%\begin{defi}
%\label{cet}
%Let $X$ be a continuous random variable with distribution function $F_X$. The conditional tail expectation of $X$ is defined as the following quantity
%$$CTE_{\alpha}(X)=\E\left[X|X>F^{-1}_{X}(\alpha)\right]=\frac{1}{1-\alpha}\E(X\mathbf{1}_{X>\theta^* })$$
%where $\theta^* =F^{-1}_{X}(\alpha)$.
%\end{defi}
%In an actuarial point of view, the  CTE or Tail VaR measures the average of losses in the event that the specified confidence level is breached. We refer the reader to for a detail review.
%the CTE is the expected loss given that the loss falls in the worst ($1-\alpha$) part of the loss distribution. The worst ($1-\alpha$) part of the loss distribution is the part above the $\alpha$-quantile $F^{-1}_{X}(\alpha)$.\\ 
In the following proposition, we give a relation between the risk measure CTE and quantile contrast index.
\begin{proposition} Assume that  $\E|Y|<\infty$,  then $\forall \alpha \in ]0, 1[$, $\forall i=1\/,\ldots\/,k$,
\begin{equation}
S_{X_i}^{{\alpha}}=1-\frac{\E\left(Y|Y>F^{-1}_{Y|X_i}(\alpha)\right)-\E(Y)}{CTE_{\alpha}(Y)-\E(Y)}\/.
\end{equation}
%where $F^{-1}_{Y|X_i}(\alpha)$ denotes the $\alpha$-quantile of $Y$ conditional on $X_i$.
\end{proposition}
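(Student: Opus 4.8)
The plan is to reduce the statement to a single elementary identity about the pinball loss: for any integrable real random variable $Z$ with continuous distribution,
\[
\min_{\theta\in\R}\E[\psi_\alpha(Z;\theta)] = (1-\alpha)\bigl(CTE_\alpha(Z)-\E[Z]\bigr),
\]
the minimum being attained at $\theta^\star=F_Z^{-1}(\alpha)$, the $\alpha$-quantile already listed in Table~\ref{Table:Levy}. To get this, I would first rewrite the loss in the split form $\psi_\alpha(y,\theta)=\alpha\,(y-\theta)_+ + (1-\alpha)\,(\theta-y)_+$, which exhibits $\theta\mapsto\E[\psi_\alpha(Z;\theta)]$ as convex with unique minimiser the quantile. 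Evaluating at $\theta^\star$, using $\E[(Z-\theta^\star)_+]=\E[Z\ind_{Z>\theta^\star}]-(1-\alpha)\theta^\star$ and $\E[(\theta^\star-Z)_+]=\alpha\theta^\star-\E[Z\ind_{Z\le\theta^\star}]$ together with $\E[Z\ind_{Z>\theta^\star}]=(1-\alpha)CTE_\alpha(Z)$ and $\E[Z\ind_{Z\le\theta^\star}]=\E[Z]-(1-\alpha)CTE_\alpha(Z)$, the terms carrying $\theta^\star$ cancel and only $(1-\alpha)(CTE_\alpha(Z)-\E[Z])$ survives.

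Applying this identity to $Z=Y$ gives at once $\min_\theta\E[\psi_\alpha(Y;\theta)]=(1-\alpha)(CTE_\alpha(Y)-\E[Y])$, which is the denominator of $S_{X_i}^\alpha$ in \eqref{eq:qosa} as well as the first term of its numerator. For the remaining term $\E\bigl(\min_\theta\E[\psi_\alpha(Y;\theta)|X_i]\bigr)$ I would run the same computation \emph{conditionally} on $X_i$: since $Y$ is continuous, for almost every value of $X_i$ the conditional law of $Y$ given $X_i$ is continuous, $\theta\mapsto\E[\psi_\alpha(Y;\theta)|X_i]$ is minimised at the $X_i$-measurable random variable $F^{-1}_{Y|X_i}(\alpha)$, and its minimal value equals $(1-\alpha)\bigl(\E[Y\mid Y>F^{-1}_{Y|X_i}(\alpha),X_i]-\E[Y|X_i]\bigr)$. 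Taking expectations and using the tower property, together with $\P\bigl(Y>F^{-1}_{Y|X_i}(\alpha)\bigr)=\E[\P(Y>F^{-1}_{Y|X_i}(\alpha)\mid X_i)]=1-\alpha$, the averaged conditional CTE is identified with $\E\bigl(Y\mid Y>F^{-1}_{Y|X_i}(\alpha)\bigr)$, so that $\E\bigl(\min_\theta\E[\psi_\alpha(Y;\theta)|X_i]\bigr)=(1-\alpha)\bigl(\E(Y\mid Y>F^{-1}_{Y|X_i}(\alpha))-\E[Y]\bigr)$.

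Substituting the two evaluations into \eqref{eq:qosa}, the common factor $1-\alpha$ cancels from numerator and denominator and the quotient collapses to $1-\dfrac{\E(Y\mid Y>F^{-1}_{Y|X_i}(\alpha))-\E[Y]}{CTE_\alpha(Y)-\E[Y]}$, which is the asserted formula. I expect the only delicate point to be the conditional step: one must argue that the minimisation over $\theta$ may be carried out inside the conditional expectation, i.e. that $\theta\mapsto\E[\psi_\alpha(Y;\theta)|X_i]$ admits a version whose pathwise minimiser is the measurable map $F^{-1}_{Y|X_i}(\alpha)$, and that continuity of $Y$ passes to the conditional distributions so that $\P(Y\le F^{-1}_{Y|X_i}(\alpha)\mid X_i)=\alpha$ a.s.; once this is granted, everything else is the bookkeeping of the elementary identity above.
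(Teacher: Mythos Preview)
Your proposal is correct and follows essentially the same approach as the paper: both evaluate the minimal pinball loss at the (conditional) quantile and identify it as $(1-\alpha)$ times a CTE-type difference, using $\P\bigl(Y>F^{-1}_{Y|X_i}(\alpha)\bigr)=1-\alpha$ obtained by conditioning on $X_i$. The only organisational difference is that you first isolate the identity $\min_\theta\E[\psi_\alpha(Z;\theta)]=(1-\alpha)(CTE_\alpha(Z)-\E[Z])$ and then apply it conditionally before averaging, whereas the paper works directly with the random minimiser $\theta_2^*=F^{-1}_{Y|X_i}(\alpha)$ and expands $\E[\psi_\alpha(Y;\theta_2^*)]$ unconditionally, deriving the needed facts $\P(Y>\theta_2^*)=1-\alpha$ and $\E(\theta_2^*\mid Y>\theta_2^*)=\E(\theta_2^*)$ along the way; the underlying computation is the same.
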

\begin{proof}
%Let us suppose that the contrast function is given by (\ref{cont_eq}), then $\forall \alpha \in ]0, 1[$, 
$S_{X_i}^{{\alpha}}$ rewrites as:
\begin{equation}
%%\label{res}
S_{X_i}^{{\alpha}}=1-\frac{\E(\underset{\theta \in \mathbb{R} }{\operatorname{min}}\left[\E(\psi_{\alpha}(Y;\theta)|X_i)\right])}{\underset{\theta \in \mathbb{R} }{\operatorname{min}}\left(\E[ \psi_{\alpha}(Y;\theta)]\right)}.
\end{equation}
Consider $\theta_{1}^{*}= \underset{\theta \in \mathbb{R}}{\operatorname{argmin}} \E[ \psi_{\alpha}(Y;\theta)]$. We have $\theta_1^*=F^{-1}(\alpha)$.% and remark that $\psi_{\alpha}$ is convex, hence we have:

%Or $$\underset{\theta \in \mathbb{R} }{\operatorname{min}}\left(\E \psi_{\alpha}(Y;\theta)\right)=\E\left(\psi_{\alpha}(Y;\theta_{1}^{*})\right), \ \  \textrm{where} \ \theta_{1}^{*}= \underset{\theta \in \mathbb{R}}{\operatorname{Argmin}} \E \psi_{\alpha}(Y;\theta)$$
%Thus,
%$\frac{\partial \E \psi_{\alpha}(Y;\theta)}{\partial \theta}=-\alpha + F_{Y}(\theta)$ $\Rightarrow \theta_{1}^{*}=F_{Y}^{-1}(\alpha)$ which implies the following:
%\begin{eqnarray*}
%\E\left(\psi_{\alpha}(Y;\theta_{1}^{*})\right)&=&(\alpha-1) \E(Y) + \E\left(Y1_{Y>F_{Y}^{-1}(\alpha)}\right)\\
%&=&(\alpha-1) \E(Y)+ \mathbb{P}(Y>F_{Y}^{-1}(\alpha))\E(Y|Y>F_{Y}^{-1}(\alpha))\\
%&=&(1-\alpha)\left(CTE_{\alpha}(Y)-\E(Y) \right).
%\end{eqnarray*}
On one other side, 
$$\E(\underset{\theta \in \mathbb{R} }{\operatorname{min}}\left[\E(\psi_{\alpha}(Y;\theta)|X_i)\right])=\E\left(\E\left(\psi_{\alpha}(Y;\theta_{2}^{*})|X_i\right)\right)=\E\left(\psi_{\alpha}(Y;\theta_{2}^{*})\right),$$
whit $\theta_{2}^{*}= \underset{\theta \in \mathbb{R}}{\operatorname{argmin}} \E\left[ \psi_{\alpha}(Y;\theta)|X_i\right]$ it is a $X_i$ measurable random variable. Let  $Z=Y-F_{Y|X_i}^{-1}(\alpha)$.\\
Remark that, $\frac{\partial \E\left[ \psi_{\alpha}(Y;\theta)|X_i\right]}{\partial \theta}=-\alpha + \mathbb{P}(Y\leq \theta|X_i)$ and $\theta_{2}^{*}=F_{Y|X_i}^{-1}(\alpha)$. We deduce:
\begin{eqnarray*}
\E\left(\psi_{\alpha}(Y;\theta_{2}^{*})\right)&=& \E\left(Z1_{Y>F_{Y|X_i}^{-1}(\alpha)}\right) -(1-\alpha)\E(Z)\\
&=&\mathbb{P}(Y>\theta_{2}^{*})\E(Y|Y>\theta_{2}^{*}) -\mathbb{P}(Y>\theta_{2}^{*})\E(\theta_{2}^{*}|Y>\theta_{2}^{*})-(1-\alpha)\E(Z).
\end{eqnarray*}
In addition, 
$$\mathbb{P}(Y>\theta_{2}^{*})=1-\mathbb{P}(Y\leq\theta_{2}^{*})=1-\E\left(\E(1_{Y\leq \theta_{2}^{*}}|X_i)\right)=1-\E\left(F_{Y|X_i}(\theta_{2}^{*})\right)=1-\alpha\/.$$
On one other side, conditioning by $X_i$, leads to
$$\E(\theta_{2}^{*}|Y>\theta_{2}^{*})=\frac{1}{1-\alpha}\E\left(\theta_{2}^{*}1_{Y>\theta_{2}^{*}}\right)=\frac{1}{1-\alpha}\E(\E\left(\theta_{2}^{*}1_{Y>\theta_{2}^{*}}|X_i\right))=\E(\theta_{2}^{*}).$$
Consequently,
\begin{eqnarray*}
\E\left(\psi_{\alpha}(Y;\theta_{2}^{*})\right)&=& (1-\alpha)\E(Y|Y>\theta_{2}^{*})-(1-\alpha)\E(\theta_{2}^{*})-(1-\alpha)\E(Y) +(1-\alpha)\E(\theta_{2}^{*})\\
&=&(1-\alpha)\left(\E(Y|Y>\theta_{2}^{*})-\E(Y) \right).
\end{eqnarray*}
which concludes the proof.
%, \ \  \textrm{where} \ \theta_{2}^{*}= \underset{\theta \in \mathbb{R}}{\operatorname{Argmin}} \E \psi_{\alpha}(Y;\theta)$$
\end{proof}
We shall use this expression of the quantile oriented indices to propose an estimation of them. 
\section{Estimation of quantile contrast index}\label{sec:asym_norm}
We consider the statistical estimation of quantile contrast index. Fix an index $i=1\/,\ldots\/,k$.%, let $X_{-i} = (X_j\/, j\neq i)$. In order to simplify notations, we write the output $Y$  as follows:
%$$Y=f(X_i,X_{-i}).$$
%where $X_i, i \in \left\{1,\ldots,k\right\}$ and $X_{-i}=(X_1\/,\ldots\/,X_{i-1}\/,X_{i+1}\/,\ldots\/,X_k)$ \in \mathbb{R}^{k-1}$ are independent.\\
%n other word $Y$ and $(X_i,X_{-i})$ are linked by the relation
%$$Y=f(X_i,X_{-i})$$
%where $f$ is a deterministic function defined on $\mathbb{R}^{k}$.\\
We assume that $\E|Y|<\infty$. We consider, for $\alpha \in ]0, 1[$. We have 
\begin{equation}
\label{New_formula_anal}
S_{X_i}^{\alpha}=1-\frac{\E\left(Y|Y>F^{-1}_{Y|X_i}(\alpha)\right)-\E(Y)}{CTE_{\alpha}(Y)-\E(Y)}. 
\end{equation}
 %that one obtain by considering the $\alpha$-quantile contrast function.\\
% This index quantifies for a level of confidence $\alpha$, the influence of $X_i$ on the output $Y$: a value of $S_{X_i}^{\psi_{\alpha}}$ that is close to $1$ indicates that $X_i$ is highly influent on $Y$.\\ 
 Except for some very specific and simple examples, an analytic formula for $S_{X_i}^\alpha$ is cannot be reached. %As in most applications, the function $f$ is unknown or too complicated, it is useless to think that one can reach analytical formulas for quantile contrast index defined in (\ref{New_formula_anal}).
 Hence, it is natural to wonder how these indices could be estimated. In the case where the contrast function is given by mean-contrast functions which correspond to Sobol index, there exist a pretty large literature dedicated to the estimation of such index (see, e.g., \cite{sobol2001global,saltelli2002making,tarantola2006random,janon2012analyse,janon2014asymptotic}). In this section, we first describe how quantile contrast index can be estimated.
% an estimation procedure inspired from Fort et al. (2013) and then we apply it and construct an estimator of (\ref{New_formula_anal}). Thus, we prove the consistency and the asymptotic normality of $\alpha$-quantile contrast estimator. 
 We propose an estimation Monte-Carlo  replication based. Indeed, we use two independent $n$-samples $(X_1^j\/,\ldots\/, X_k^j)$, $(X_1^{*j}\/,\ldots\/, X_k^{*j})$, $j=1\/,\ldots\/, n$. %This replication is obtained by holding the value of the variable of interest (frozen variable) and by sampling the other variables (picked variables).
%%%%%%%%
\subsection{Estimation procedure}
%%%%%%%%%%%%%
Let $\theta^{*}=F^{-1}_{Y}(\alpha)$ and $\theta_{i}(x)=F^{-1}_{Y|X_{i}=x}(\alpha)$ be the respective quantile at level $\alpha$ of $Y$ and of $Y|X_{i}=x$. \\
Consider $Y_1, \ldots, Y_n$ are $n, n \geq 1$ an i.i.d. $n$-sample of the distribution of $Y$. %, then $\theta^{*}$ and $\theta_{i}(x)$ may be estimated using nonparametric estimation methods. Hence, for a fixed level $\alpha \in ]0, 1[$, one may obtain estimators of $\theta^{*}$ and $\theta_{i}(x)$ denoted by $\hat{\theta}^{*}$ and $\hat{\theta}_{i}(x)$ by inverting the following systems:
$F_n$ denotes the empirical distribution function: 
$$F_n(y) = \displaystyle\frac{1}{n}\sum_{i=1}^{n}1_{\left\{Y_i \leq y\right\}}, \forall y \in \mathbb{R}$$
 and $\widehat{\theta}^*$ is the classical quantile estimator. \\
The conditional distribution function $F_{Y | X_i=x}$ can be estimated by a Kernel estimator:
$${F}_{n}(y|X_i=x)=\frac{\sum_{j=1}^{n}K \left(\frac{x-X_{i}^{j}}{h_n} \right)1_{\left\{Y_j \leq y\right\}}}{\sum_{j=1}^{n}K \left(\frac{x-X_{i}^{j}}{h_n} \right)},$$
where $K$ is a kernel and $h_n$ is a positive number depending on the sample size $n$, called bandwidth. Then $\theta_i(x)$ is obtained by
%\begin{equation}
%\label{quantil_estimator}
%\hat{\theta}^{*}=\hat{F}^{-1}_{n}(\alpha)=\inf \left\{y:\hat{F}_{n}(y)\geq \alpha \right\},
%\end{equation}
\begin{equation}
\label{quantil_cond_estimator}
\hat{\theta}_{i}(x)={F}_{n}^{-1}(\alpha|x)=\inf \left\{y:{F}_{n}(y|x) \geq \alpha \right\},
\end{equation}
We refer the reader to \cite{bhattacharya1990kernel}; \cite{koenker1996conditional}; \cite{gannoun2003nonparametric} or \cite{takeuchi2006nonparametric} for a detailed review on quantile and conditional quantile estimation.\\
\ \\

% Once the contrast index have been defined, the question of their estimation remains open. 
% The consistency and normality of Sobol index estimator which correspond to contrast index estimator with mean-contrast function has been proved in \cite{Janon}. Our goal is to demonstrate the consistency and normality of $\alpha$-quantile contrast estimator. As for Sobol index, 
  In what follows, we give an estimation procedure for estimating quantile contrast index. It  requires the two following steps (recall that $Y=f(X_1,\ldots, X_k)$)
  \begin{enumerate}
\item Generate $X_{1}^j,\ldots,X_{k}^j$ and compute the $Y_j=f(X_{1}^j,\ldots,X_{k}^j)$, for $j=1,\ldots, n$. Replace in (\ref{New_formula_anal}) the expectation $\E(Y)$ by its empirical version.
\item  Generate $X^{*j}=(X_{1}^{*j},\ldots,X_{k}^{*j})$ for $j=1,\ldots, n$ (independent copies of the previous vectors) and compute the $Y^{*}_j=f(X_{1}^{*j},\ldots,X_{k}^{*j})$, $j=1,\ldots, n$. Then from the sample $Y^{*}_j=f(X_{1}^{*j},\ldots,X_{k}^{*j})$, $j=1,\ldots, n$ compute $\hat{\theta}^{*}$ and $ \hat{\theta}_{i}(X_{i})$.
\end{enumerate}
%  construct a natural estimator of  (\ref{New_formula_anal}) for which we prove the consistency and the asymptotic normality. 
% The next lemma shows how to express $S^{X}$ as a ratio of empirical mean. This will lead to a natural estimator.
% \begin{lem}\label{lem_cte} Assume that the random variable $Y$ is integrable. Then 
% \begin{equation}
%CTE_{\alpha}(Y)-\E(Y)=\frac{1}{1-\alpha}\E(Y\mathbf{1}_{Y>F^{-1}_{Y}(\alpha) })-\E(Y)=\E(Z)
%\end{equation}
%and 
%\begin{equation}
%\E\left(Y|Y>F^{-1}_{Y|X}(\alpha)\right)-\E(Y)=\frac{1}{1-\alpha}\E(Y\mathbf{1}_{Y>F^{-1}_{Y|X}(\alpha) })-\E(Y)=\E(R)
%\end{equation}
%where $$Z=Y\left(\frac{1}{1-\alpha}\mathbf{1}_{Y>F^{-1}_{Y}(\alpha)}-1 \right),\ \ \ R=Y\left(\frac{1}{1-\alpha}\mathbf{1}_{Y>F^{-1}_{Y|X}(\alpha)}-1 \right)$$
% \end{lem}
% \begin{proof}
% 
% \end{proof}
An estimator of the index (\ref{New_formula_anal}) is given by
% In view of Lemma \ref{lem_cte}, we can define a natural estimator of $S^X$ as 
 \begin{equation}
 \label{est_empiriq}
S_{n\/,X_i}^{\alpha}=1-\frac{\frac{1}{n(1-\alpha)}\sum_{j=1}^{n}Y_{j}^{} 1_{Y_{j}^{}>\hat{\theta}_{i}(X_{i}^j)} - \frac{1}{n}\sum_{j=1}^{n}Y_{j}^{}}{\frac{1}{n(1-\alpha)}\sum_{j=1}^{n}Y_j 1_{Y_{j}>\hat{\theta}^*} - \frac{1}{n}\sum_{j=1}^{n}Y_j}.
%\frac{\frac{1}{n_1}\sum_{j=1}^{n_1}\left(\psi(Y_j;\hat{\theta}^*)-\psi(Y_j;\hat{\theta}_{i}(X_{i}^{j})) \right)}{\frac{1}{n_1}\sum_{j=1}^{n_1}\psi(Y_j;\hat{\theta}^*)}
\end{equation}
%\begin{remark}
%The computation of $\hat{\theta}^{*}$ and $\hat{\theta}_{i}(X_{i}^{j})$ can be done using (\ref{quantil_estimator}) and (\ref{quantil_cond_estimator}). %Note that the convergence results that we prove next is valid conditionally to the sample $X^{*}$.
%\end{remark}
%where, for $j=1, \ldots, n$:
%$$Y_j=f(X_j, W_j)$$ and $\left\{(X_j, W_j) \right\}_{j=1,\ldots,n}$ is an independent and identically distributed (i.i.d) sample of the distribution of $(X, W)$. For a fixed level of confidence $\alpha$, the quantity $\hat{\theta}_{}(X_{j})}$ and $\hat{\theta}^*$ represent the empirical version of $F^{-1}_{Y}(\alpha)$ and $F^{-1}_{Y|X}(\alpha)$ estimated from an independent copie of the distribution of $(X, W)$.\\
Remark that equation (\ref{est_empiriq}) is equivalent to: 
\begin{equation}
\label{est_empi_suite}
S_{n\/,X_i}^{\alpha}=1-\frac{\frac{1}{n}\sum_{j=1}^{n}{R_{j}^{i}}}{\frac{1}{n}\sum_{j=1}^{n}{Z_j}},
\end{equation}
where, for $j=1, \ldots, n$:
$$R_{j}^{i}=Y_{j}^{} \left(\frac{1}{1-\alpha}\textbf{1}_{Y_{j}^{}>\hat{\theta}_{i}(X_{i}^{j})} -1 \right), \ \ \  Z_j=Y_j \left(\frac{1}{1-\alpha}\textbf{1}_{Y_{j}>\hat{\theta}^*} -1 \right).$$
conditionally to the sample $X^{*}$, are two independent and identically distributed (i.i.d) sample of the distribution of $R^{i}$ and $Z$ where 
$$R^{i}=Y\left(\frac{1}{1-\alpha}\mathbf{1}_{Y>F^{-1}_{Y|X_i}(\alpha)}-1 \right), \ \ \ Z=Y\left(\frac{1}{1-\alpha}\mathbf{1}_{Y>F^{-1}_{Y}(\alpha)}-1 \right).$$
%\begin{remark}
%The fact that the sample of the distribution of $R$ and $Z$ are independent and identically distributed is a direct consequence on how we have constructed  $\hat{\theta}^{*}$ and $\hat{\theta}_{i}(X_{i}^{j})$.
%\end{remark}
For the sake of simplicity, we denote by $$\bar{Z}_n=\frac{1}{n}\sum_{j=1}^{n}{Z_j},\ \ \ \  \bar{R}_n=\frac{1}{n}\sum_{j=1}^{n}{R_{j}^{i}}.$$
\begin{proposition} (Consistency). Assume that  $\E|Y|<\infty$, then 
\begin{equation}
%ss=	 \sqrt N (\widehat S_i - S_i) \overset{\mathcal{L}}{\underset{N\to\infty}{\rightarrow}}\mathcal{N}\left(0, \sigma_S^2\right).
S_{n\/,X_i}^{\alpha} \overset{a.s}{\underset{n\to\infty}{\longrightarrow}}S_{X_i}^{{\alpha}}
\end{equation}

\end{proposition}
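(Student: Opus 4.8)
The plan is to prove consistency by decomposing the estimator into its numerator and denominator, showing that each converges almost surely, and then applying the continuous mapping theorem (taking care that the denominator limit is nonzero). Recall that conditionally on the sample $X^*$, the variables $R_j^i$ and $Z_j$ are i.i.d.\ copies of $R^i$ and $Z$ respectively, so the natural first move is a conditional strong law of large numbers followed by an unconditioning argument.

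First I would handle the denominator, which is the easier piece because it involves only the unconditional quantile $\theta^*=F_Y^{-1}(\alpha)$ and its estimator $\hat\theta^*$. Write $\bar Z_n = \frac1n\sum_j Y_j\bigl(\frac{1}{1-\alpha}\mathbf 1_{Y_j>\hat\theta^*}-1\bigr)$ and split it as $\frac1n\sum_j Y_j\bigl(\frac{1}{1-\alpha}\mathbf 1_{Y_j>\theta^*}-1\bigr)$ plus an error term $\frac{1}{n(1-\alpha)}\sum_j Y_j(\mathbf 1_{Y_j>\hat\theta^*}-\mathbf 1_{Y_j>\theta^*})$. The first term converges a.s.\ to $\E\!\bigl[Y(\frac{1}{1-\alpha}\mathbf 1_{Y>\theta^*}-1)\bigr] = CTE_\alpha(Y)-\E(Y)$ by the strong law of large numbers (using $\E|Y|<\infty$). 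For the error term, use the a.s.\ consistency of the empirical quantile $\hat\theta^*\to\theta^*$ together with a uniform-integrability / dominated-convergence argument on the event $\{Y_j$ between $\hat\theta^*$ and $\theta^*\}$; since $Y$ is continuous, $\P(Y>\theta^*)=1-\alpha>0$, so $CTE_\alpha(Y)-\E(Y)>0$ whenever $Y$ is nondegenerate, which guarantees the denominator limit is strictly positive.

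Next I would treat the numerator $\bar R_n$, which is the delicate part because it involves the conditional quantile estimator $\hat\theta_i(x)={F}_n^{-1}(\alpha\mid x)$ obtained from a kernel smoother, and its limiting object is the $X_i$-measurable random function $F^{-1}_{Y|X_i}(\alpha)$. The strategy is the same decomposition: replace $\hat\theta_i(X_i^j)$ by the true $F^{-1}_{Y|X_i^j}(\alpha)$, so that $\bar R_n$ splits into a clean i.i.d.\ average converging a.s.\ to $\E\!\bigl[Y(\frac{1}{1-\alpha}\mathbf 1_{Y>F^{-1}_{Y|X_i}(\alpha)}-1)\bigr]$ — which by the Proposition of Section~\ref{sec:link_cte} equals $\E(Y\mid Y>F^{-1}_{Y|X_i}(\alpha))-\E(Y)$ — plus a remainder controlled by the uniform consistency of the conditional quantile estimator. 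Here I would invoke the classical results on kernel conditional quantile estimation cited in the text (\cite{bhattacharya1990kernel,gannoun2003nonparametric,takeuchi2006nonparametric}), which give $\sup_x|\hat\theta_i(x)-F^{-1}_{Y|X_i=x}(\alpha)|\to 0$ a.s.\ under suitable bandwidth conditions ($h_n\to0$, $nh_n\to\infty$), and then bound the remainder $\frac{1}{n(1-\alpha)}\sum_j |Y_j|\,\mathbf 1_{Y_j\text{ between the two quantiles}}$ in the same way as for the denominator.

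Finally, combining the two convergences and using that the denominator limit $CTE_\alpha(Y)-\E(Y)$ is nonzero, the ratio $\bar R_n/\bar Z_n$ converges a.s.\ to $\bigl(\E(Y\mid Y>F^{-1}_{Y|X_i}(\alpha))-\E(Y)\bigr)/\bigl(CTE_\alpha(Y)-\E(Y)\bigr)$, hence $S_{n,X_i}^\alpha\to S_{X_i}^\alpha$ a.s.\ by \eqref{New_formula_anal}. The main obstacle I anticipate is rigorously controlling the remainder terms generated by substituting the estimated (conditional) quantiles for the true ones: one must show that the contribution of the indicator-difference $\mathbf 1_{Y_j>\hat\theta}-\mathbf 1_{Y_j>\theta}$, weighted by $Y_j$ and summed, vanishes a.s.; this is where the continuity of the distribution of $Y$ (and of $Y\mid X_i$), together with an integrability/equicontinuity argument and the a.s.\ uniform consistency of the kernel conditional quantile estimator, does the real work. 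The i.i.d.\ SLLN parts are routine.
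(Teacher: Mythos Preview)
Your proposal is correct in strategy and considerably more careful than the paper's own argument. The paper's proof consists of a single sentence: ``The result is a straightforward application of the strong law of large numbers conditionally to $X^*$.'' This rests on the assertion, made just above the proposition, that conditionally on the second sample $X^*$ the $R_j^i$ (resp.\ $Z_j$) form an i.i.d.\ sample from the distribution of the \emph{true} $R^i$ (resp.\ $Z$), i.e.\ with the true conditional quantile $F^{-1}_{Y|X_i}(\alpha)$ already in place of its kernel estimate $\hat\theta_i$. Taken literally this is not right: conditionally on $X^*$ the $R_j^i$ are indeed i.i.d., but from a law that involves the \emph{estimated} quantiles $\hat\theta^*,\hat\theta_i(\cdot)$, which depend on $n$; the passage from that law to the law of $R^i,Z$ is exactly the quantile-substitution error you isolate. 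In effect the paper collapses into one line what you call the ``natural first move,'' and treats the rest as automatic.

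What you do differently is to make the decomposition (true-quantile i.i.d.\ average) $+$ (indicator-difference remainder) explicit and to invoke the a.s.\ consistency of the empirical quantile and the a.s.\ uniform consistency of the kernel conditional quantile estimator to kill the remainder. This buys rigor: it surfaces the regularity ingredients (continuity of the laws of $Y$ and $Y\mid X_i$, bandwidth conditions $h_n\to0$, $nh_n\to\infty$, uniform consistency over the relevant range of $x$) that are genuinely needed, none of which appear in the paper's hypotheses or proof. The only caveat is that uniform-in-$x$ consistency of kernel conditional quantile estimators typically requires additional assumptions (compact support of $X_i$ or density bounds) that you should state explicitly when writing the argument out in full.
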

\proof
The result is a straightforward application of the strong law of large number conditionally to $X^*$. 
%In fact, $S^{X_i}_{n, \alpha}=g(\bar{R}_n, \bar{Z}_n)$ where 
%$$\begin{array}{ccccc}
%g & : & \mathbb{R} \times \mathbb{R}^* & \to & \mathbb{R} \\
% & & (x,y) & \mapsto & 1-\frac{x}{y} \\
%\end{array}$$
%is continuous. Since $\E|Y|<\infty$, we deduce that $\E(|Z|)\leq \frac{2-\alpha}{1-\alpha}\E(|Y|)<\infty$ and $\E(|R^{i}|)\leq \frac{2-\alpha}{1-\alpha}\E(|Y|)<\infty$. Thus,  the strong law of large number implies that $\bar{R}_n \overset{p.s}{\underset{n\to\infty}{\longrightarrow}} \E(R^{i})$ and $\bar{Z}_n \overset{p.s}{\underset{n\to\infty}{\longrightarrow}} \E(Z)$. By the continuity of $g$, we get that $g(\bar{R}_n, \bar{Z}_n)  \overset{p.s}{\underset{n\to\infty}{\longrightarrow}} g(\E(R^{i}),\E(Z))$ which concludes the proof.
\endproof

\begin{proposition} (Asymptotic normality). Assume that $Y$ is square integrable. Then,
\label{conv1}
\begin{equation}
 \sqrt{n} (S_{n\/,X_i}^{\alpha} - S_{X_i}^{{\alpha}}) \overset{\mathcal{L}}{\underset{n\to\infty}{\longrightarrow}}\mathcal{N}\left(0, \sigma_S^2\right),
 \end{equation}
 where $$\sigma_S^2=\frac{\var(R^{i}) -2\beta(1-S_{X_i}^{{\alpha}}) + (1-S_{X_i}^{{\alpha}})^2 \var(Z)}{\left(CTE_{\alpha}(Y)-\E(Y)\right)^2},$$
 and $\beta=\cov(R^{i},Z)$.
\end{proposition}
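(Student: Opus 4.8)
The plan is to apply the multivariate central limit theorem to the pair $(\bar R_n, \bar Z_n)$ and then pass through the smooth map $(r,z)\mapsto 1-r/z$ via the delta method. Conditionally on the sample $X^*$ (which is used to build the estimators $\hat\theta^*$ and $\hat\theta_i(\cdot)$), the variables $(R_j^i, Z_j)$ are i.i.d.\ copies of $(R^i, Z)$; in the limit, however, the dependence on $X^*$ disappears because $\hat\theta^*$ and $\hat\theta_i(x)$ are consistent estimators of $\theta^*=F_Y^{-1}(\alpha)$ and $\theta_i(x)=F^{-1}_{Y|X_i=x}(\alpha)$. So I would first argue, using square integrability of $Y$ together with the consistency of the quantile estimators, that the asymptotic covariance matrix is the (unconditional) covariance matrix $\Sigma$ of $(R^i,Z)$, whose entries are $\var(R^i)$, $\var(Z)$ and $\beta=\cov(R^i,Z)$. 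This gives
\begin{equation*}
\sqrt n\left(\begin{pmatrix}\bar R_n\\ \bar Z_n\end{pmatrix}-\begin{pmatrix}\E(R^i)\\ \E(Z)\end{pmatrix}\right)\overset{\mathcal L}{\underset{n\to\infty}{\longrightarrow}}\mathcal N(0,\Sigma).
\end{equation*}

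Next I would identify the limiting means. From the proof of the first proposition, $\E(Z)=(1-\alpha)^{-1}\E(Y1_{Y>\theta^*})-\E(Y)=CTE_\alpha(Y)-\E(Y)$, and similarly $\E(R^i)=\E(Y\mid Y>F^{-1}_{Y|X_i}(\alpha))-\E(Y)$, so that $1-\E(R^i)/\E(Z)=S_{X_i}^\alpha$. Then I would apply the delta method to $g(r,z)=1-r/z$ at the point $(\E(R^i),\E(Z))$: the gradient is $\nabla g=(-1/\E(Z),\ \E(R^i)/\E(Z)^2)=\bigl(-1/(CTE_\alpha(Y)-\E(Y)),\ (1-S_{X_i}^\alpha)/(CTE_\alpha(Y)-\E(Y))\bigr)$, and the resulting asymptotic variance is $\nabla g\,\Sigma\,\nabla g^{\top}$, which expands to exactly
\begin{equation*}
\sigma_S^2=\frac{\var(R^i)-2\beta(1-S_{X_i}^\alpha)+(1-S_{X_i}^\alpha)^2\var(Z)}{\bigl(CTE_\alpha(Y)-\E(Y)\bigr)^2}.
\end{equation*}

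The main obstacle is the first step: justifying that replacing the true quantiles $\theta^*$, $\theta_i(X_i)$ by their estimators $\hat\theta^*$, $\hat\theta_i(X_i)$ does not affect the CLT limit, i.e.\ that the indicator terms $1_{Y_j>\hat\theta_i(X_i^j)}$ can be swapped for $1_{Y_j>\theta_i(X_i^j)}$ at the $\sqrt n$ scale. This requires control of the estimation error of the classical and of the kernel conditional quantile estimators — in particular that $\sqrt n$ times the difference of the empirical means of $R_j^i$ and its ``oracle'' version is $o_{\P}(1)$, which relies on continuity of the distribution of $Y$ near its $\alpha$-quantile (so that $\E(Y1_{\theta_i(X_i)<Y\le\hat\theta_i(X_i)})$ is small) and on the uniform consistency of $\hat\theta_i$. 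Once this oracle replacement is in place, the rest is the routine CLT-plus-delta-method computation sketched above.
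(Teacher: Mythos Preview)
Your approach is the same as the paper's: condition on $X^*$, apply the bivariate CLT to $(\bar R_n,\bar Z_n)$, and then the delta method with $h(x,y)=1-x/y$; your gradient and the resulting variance match the paper's computation line for line. The oracle-replacement step you flag as the main obstacle is not carried out in the paper either --- the paper simply asserts, just before the consistency proposition, that conditionally on $X^*$ the pairs $(R_j^i,Z_j)$ are i.i.d.\ with the law of $(R^i,Z)$ built from the \emph{true} quantiles, and then runs the CLT and delta method with $\Gamma=\cov(R^i,Z)$ and $\mu=(\E(R^i),\E(Z))$. So your concern is legitimate and your treatment is, if anything, more careful than the paper's, but the paper itself supplies no additional argument on this point.
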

\proof
 We do the proof conditionally to $X^*$. Denote
 $$S_{n\/,X_i}^{\alpha}=h(\bar{U}_n),$$
 where $U_j=(R_{j}^{i}, Z_j)^T$ and $$h(x,y)=1-\frac{x}{y}.$$
 The central limit theorem gives that:
$$ \sqrt{n}(\bar{U}_n - \mu) \overset{\mathcal{L}}{\underset{n\to\infty}{\longrightarrow}}\mathcal{N}_{2}\left(0,\Gamma \right),$$ 
where $\Gamma$ is the covariance matrice of $(R^{i},Z)$ and $\mu=\left(\begin{array}{c} \E(R^{i}) \\ \E(Z) \end{array} \right)$. Since the function $h$ is differentiable at $\mu$, the Delta method gives:
%le th\'eor\`{e}me de la m\'ethode Delta implique:
$$ \sqrt{n} (S_{n\/,X_i}^{ \alpha} - S_{X_i}^{{\alpha}}) \overset{\mathcal{L}}{\underset{n\to\infty}{\longrightarrow}}\mathcal{N}\left(0, g^{T}\Gamma g\right),$$
where $$g=\nabla h(\mu).$$
By differentiation, we get that, for any $x,y$ so that $y \neq 0$:
$$\nabla h(x,y)=\left(-\frac{1}{y}, \frac{x}{y^2} \right)^T.$$
Hence, using \ref{New_formula_anal}, we get that $$g=\left(\frac{S_{X_i}^{{\alpha}} -1}{\E(R^{i})},\frac{(1-S_{X_i}^{{\alpha}})^2}{\E(R^{i})} \right)^T$$
Thus
\begin{equation*}
\begin{aligned}
g^{T}\Gamma g &=& \var(R^{i})\frac{(1-S_{X_i}^{{\alpha}})^2}{\E(R^{i})^2}-2\cov(R^{i},Z)\frac{(1-S_{X_i}^{{\alpha}})^3}{\E(R^{i})^2}+\var(Z)\frac{(1-S_{X_i}^{{\alpha}})^4}{\E(R^{i})^2}\\
&=&\frac{(1-S_{X_i}^{{\alpha}})^2}{\E(R^{i})^2}\left(\var(R^{i})-2\cov(R^{i},Z)(1-S_{X_i}^{{\alpha}}) + \var(Z)(1-S_{X_i}^{{\alpha}})^2\right).
\end{aligned}
\end{equation*}
Remark that $\frac{(1-S_{X_i}^{{\alpha}})^2}{\E(R^{i})^2}=\frac{1}{\E(Z)^2}$ and that $\E(Z)=CTE_{\alpha}(Y)-\E(Y)$, hence we deduce that
$$g^{T}\Gamma g=\frac{\var(R^{i}) -2\beta(1-S_{X_i}^{\psi_{\alpha}}) + (1-S_{X_i}^{{\alpha}})^2 \var(Z) }{\left(CTE_{\alpha}(Y)-\E(Y)\right)^2},$$
where $\beta=\cov(R^{i},Z)$ which concludes the proof.
\endproof
\begin{remark}
When computing $\sigma_S^2$, we replace the variance of $R^{i}$ and $Z$ as well as the covariance of the random vector $(R^{i}, Z)$ by their empirical versions.
\end{remark}
\begin{remark}
Following \cite{janon2014asymptotic}, in order to improve the efficiency of the estimator, $CTE_\alpha(Y)$ and $\E(Y)$ could be estimated by using the complete sample $X,X^*$. A further study is needed to get its asymptotic properties. 
\end{remark}
\subsection{Numerical illustrations}
In order to validate our estimation procedure, we illustrate the asymptotic results of Proposition \ref{conv1} in the following example that is considered in \cite{fort2013new}. 
\begin{example}
Let us considerer an output of type
$$Y=f(X_1,X_2)=X_1 + X_2.$$
where $X_1 \sim Exp(1)$ and $X_2 \sim -X_1$ with $X_1$ and $X_2$ independent. The quantile contrast index of $X_1$ and $X_2$ are known analytically (see \cite{fort2013new}) and they are given by
%For a fixed confidence level $\alpha$, the contrast index of $f$ with respect to input variable $X_1$ is $S^X$ defined in \ref{est_empiriq} for $X=X_1$ and $W=X_2$. Similary the sensitivity of  $X_2$ is obtained taking $X=X_2$ and $W=X_1$. For the model that we consider, the contrast indices are analytically known (\cite{fort2013new})
%%\begin{remark}
%La variance asymptotique $\sigma_S^2$ peut \^{e}tre estim\'ee facilement, ce qui permet de construire un intervalle de confiance asymptotique pour $S^X$, de longueur proportionnelle \`{a} $\frac{\sigma_S}{\sqrt N}$
%si l'estimateur $S_{N}^X$ est utilis\'e.
%\end{remark}
%\subsection{Illustration num\'erique} Dans cette partie, nous illustrons nos r\'esultats de normalit\'e asymptotique sur le mod\`{e}le test suivant (consid\'er\'e dans Rachdi et Klein) $$Y=X_1 +X_2$$ o\`{u} $X_1 \sim Exp(1)$ et $X_2 \sim -X_1$ avec $X_1$ et $X_2$ ind\'ependants. La variable al\'eatoire $Y$ suit une loi de Laplace de param\`{e}tre $1$ i.e la densit\'e de $Y$ est donn\'ee par: $\forall x \in \mathbb{R}$,
%$$f_{Y}(x)=\exp \left(\frac{-\mid x \mid}{2} \right)$$
%Les valeurs des indices de sensibilit\'e pour cette fonction sont connues analytiquement:
\begin{equation*}
%\label{fo1}
S_{X_1}^{\alpha}=
  \left\{
    \begin{aligned}
     \frac{(1-\alpha)(1-\log(2(1-\alpha)))+\alpha \log(\alpha)}{(1-\alpha)(1-\log(2(1-\alpha)))} \ \ \ if \ \ \  \alpha \geq \frac{1}{2}\\
       \frac{\alpha(1-\log(2\alpha))+\alpha \log(\alpha)}{\alpha(1-\log(2\alpha))} \ \ \ if \ \ \  \alpha < \frac{1}{2}\\
    \end{aligned}
  \right.
\end{equation*}
and
\begin{equation*}
%\label{
S_{X_2}^{\alpha}=
  \left\{
    \begin{aligned}
     \frac{(1-\alpha)(1-\log(2(1-\alpha)))+(1-\alpha) \log(1-\alpha)}{(1-\alpha)(1-\log(2(1-\alpha)))} \ \ \ if \ \ \  \alpha \geq \frac{1}{2}\\
       \frac{\alpha(1-\log(2\alpha))+(1-\alpha) \log(1-\alpha)}{\alpha(1-\log(2\alpha))} \ \ \ if \ \ \  \alpha < \frac{1}{2}\\
    \end{aligned}
  \right.
\end{equation*}
In the case where $\alpha=\frac{1}{2}$, then $S_{X_1}^{\alpha}=S_{X_2}^{\alpha}$.\\ 
Table \ref{tab_vasi1} displays the sensitivity of $X_1$ and $X_2$ for different values of $\alpha$. We estimate for each variable a $95\%$ confidence interval.  We denote by $S^{X_1}_{n, \alpha}$ and $S^{X_2}_{n, \alpha}$ as the empirical estimator of $S_{X_1}^{\alpha}$ and $S_{X_2}^{\alpha}$ given in  (\ref{est_empiriq}) for a sample size $n=100000$. The conditional quantile of $Y|X_1$ and $Y|X_2$ are computed using a gaussian kernel with a bandwidth $h_n=n^{-\frac{1}{5}}$. The quantity $IC_1$ and $IC_2$ denote the respective confidence interval of $X_1$ and $X_2$. Table \ref{tab_vasi1} presents, for different values of $\alpha$, the relative mean square error (RMSE) of $X_1$ and $X_2$ which measures the relative average of the square of the "errors", that is the following quantity
$$RMSE_{X_i}=\sqrt{\frac{1}{n}\sum_{j=1}^{n}{\left( \frac{(S^{X_i}_{n, \alpha})_j - S_{X_i}^{\alpha}}{S_{X_i}^{\alpha}}\right)^{2}}}, \ \ i=1,2.$$
%the difference between the estimators $S^{X_j}_n, j=1,2$ and the true value index $S^{X_j}, j=1,2$. As we can see, the MSE is low which assess the reliability of our estimator.
%On remarquera que $S^{X_1}=S^{X_2}$ lorsque $\alpha=\frac{1}{2}$. Dans le Tableau \ref{tab_vasi1}, on a repr\'esent\'e les valeurs des indices obtenues par simulation Monte Carlo et les indices analytiques pour diff\'erentes valeurs de $\alpha$. Le nombre de simulation est $N=100000$.
As we can see in Table \ref{err_quad}, the RMSE is low which assess the reliability of our estimator. From this result, we can conclude that the estimated indices $S^{X_i}_{n, \alpha}, i=1,2$ is near to the true values modulo the Monte-
Carlo errors due to the numerical simulation of the indices.
\begin{table}[h]
\centering
\begin{tabular}{|c|c|c|c||c|c|c|}
\hline 
$\alpha$&$S_{X_1}^{\alpha}$&$S_{n\/,X_1}^{\alpha}$& $IC_1$&$S_{X_2}^{\alpha}$&$S_{n\/,X_2}^{\alpha}$&$IC_2$\\
\hline
$0.05$&$0.0929$&$0.0950$& $[0.0807,0.1052]$ &$0.7049$&$0.7005$ &$[06917,0.7181]$\\
\hline
$0.1$&$0.1176$&$0.1166$&$[0.1097,0.1255]$&$0.6366$&$0.6397$&$[0.6259,0.6474]$\\
\hline
$ {0.5}$&$ {0.3069}$&$0.3108$& $[0.3009,0.3128]$&$ {0.3069}$&$0.3042$&$[0.3010,0.3127]$\\
\hline
$0.7$&$0.4491$&$0.4436$&$[0.4417,0.4566]$&$0.2031$&$0.2026$&$[0.1980,0.2082]$\\
\hline
$0.99$&$0.7974$&$0.7907$&$[0.7756,0.8193]$&$0.0625$&$0.0630$&$[0.0309,0.0941]$\\
\hline
\end{tabular}
\caption{Quantile contrast index confidence interval at level $95\%$ for different values of $\alpha$.}
\label{tab_vasi1}
\end{table}
%\\
%Dans le Tableau \ref{tab_vasi2}, on a repr\'esent\'e l'erreur quadratique moyenne (Mean Squared Error) not\'e MSE entre $S^X$ et $S_{N}^X$.
\begin{table}[h]
\centering
\begin{tabular}{|c|c|c||c|c|c|c|}
\hline 
%$\alpha$&$S^{X_1}$&$S^{X_1}_N$& $IC_1$&$S^{X_2}$&$S^{X_2}_N$&$IC_2$\\
$\alpha$&$S_{X_1}^{\alpha}$&$RMSE$&$S_{X_2}^{\alpha}$&$RMSE$\\
\hline
$0.05$&$0.0929$&$0.0318$&$0.7049$&$0.0064$\\
%$0.05$&$0.0929$&$0.0950$& $[0.0807,0.1052]$ &$0.7049$&$0.7005$ &$[06917,0.7181]$\\
\hline
$0.1$&$0.1176$&$0.0241$&$0.6366$&$0.0065$\\
%$0.1$&$0.1176$&$0.1166$&$[0.1097,0.1255]$&$0.6366$&$0.6397$&$[0.6259,0.6474]$\\
\hline
$ {0.5}$&$ {0.3069}$&$0.0091$&$ {0.3069}$&$0.0096$\\
%$ {0.5}$&$ {0.3069}$&$0.3108$& $[0.3009,0.3128]$&$ {0.3069}$&$0.3042$&$[0.3010,0.3127]$\\
\hline
$0.7$&$0.4491$&$0.0075$&$0.2031$&$0.0130$\\
%$0.7$&$0.4491$&$0.4436$&$[0.4417,0.4566]$&$0.2031$&$0.2026$&$[0.1980,0.2082]$\\
\hline
%$0.99$&$0.7974$&$0.7907$&$[0.7756,0.8193]$&$0.0625$&$0.0630$&$[0.0309,0.0941]$\\
\hline
\end{tabular}
\caption{Relative mean square error (RMSE).}
\label{err_quad}
\end{table}
\end{example}
\begin{example}(Vasicek Model)
Here we present a financial application of  the use of quantile contrast index. We focus on the classical Vasicek model where the yield curve is given as an output of an instantaneous spot rate model with the following risk-neutral dynamics
 \begin{equation}\label{e:vasicek}
dr_t=a(b-r_t)dt + \sigma dW_{ t}
\end{equation}
where $a$,\ $b$ and $\sigma$ are positive constants and where $W$ is a standard brownian motion. Parameter $\sigma$ is the volatility of the short rate process, $b$ corresponds to the long-term mean-reversion level whereas $a$ is the speed of convergence of the short rate process $r$ towards level $b$. The price at time $t$ of a zero coupon bond  with maturity $T$ in such a model is given by (see, e.g., \cite{brigo2007interest}):
\begin{equation}
\label{Survival_Probe}
P(t,T)=A(t,T)e^{-r_tB(t,T)}
\end{equation}
where 
$$A(t,T)=\exp \left( (b -\frac{\sigma^2}{2a^2} )(B(t,T) -(T-t)) -\frac{\sigma^2}{4a}B^{2}(t,T) \right)$$
and
$$B(t,T)=\frac{1-e^{-a(T-t)}}{a}.$$
The yield-curve can be obtained as a deterministic transformation of zero-coupon bond prices at different maturities.\\
In what follows, we quantify the sensitivity of the input parameters  $\left \{a, b, \sigma \right \}$ affecting the uncertainty in the bond price at time $t=0$. In the following numerical experiments, the maturity $T$ and the initial spot rate $r_0$ are chosen such that $T=1$ and $r_0=10\%$. Table \ref{tab_vasi1_alpha_quan}  reports the estimated quantile contrast indices of the parameter $a, b, \sigma$  for different values of $\alpha$  where  the probability laws of these parameters are uniform on $[0, 1]$. For this model, no closed form formula is available.
\begin{table}[h]
\centering
\begin{tabular}{|c|c|c||c|c|c|c|}
\hline 
%$\alpha$&$S^{X_1}$&$S^{X_1}_N$& $IC_1$&$S^{X_2}$&$S^{X_2}_N$&$IC_2$\\
$\alpha$&$\widehat{S}_{a}^{\alpha}$&$\widehat{S}_{b}^{\alpha}$&$\widehat{S}_{\sigma}^{\alpha}$\\
\hline
$0.05$&$0.4961$&$\textcolor{red}{0.5961}$&$0.0210$\\
%$0.05$&$0.0929$&$0.0950$& $[0.0807,0.1052]$ &$0.7049$&$0.7005$ &$[06917,0.7181]$\\
\hline
$0.1$&$0.1948$&$0.4036$&$0.0667$\\
%$0.1$&$0.1176$&$0.1166$&$[0.1097,0.1255]$&$0.6366$&$0.6397$&$[0.6259,0.6474]$\\
\hline
$0.5$&$0.1722$&$0.2685$&$0.0508$\\
%$ {0.5}$&$ {0.3069}$&$0.3108$& $[0.3009,0.3128]$&$ {0.3069}$&$0.3042$&$[0.3010,0.3127]$\\
\hline
$0.7$&$0.1096$&$0.1679$&$\textcolor{red}{0.1913}$\\
%$0.7$&$0.4491$&$0.4436$&$[0.4417,0.4566]$&$0.2031$&$0.2026$&$[0.1980,0.2082]$\\
\hline
$0.9$&$0.1053$&$0.0025$&$0.2928$\\
%$0.99$&$0.7974$&$0.7907$&$[0.7756,0.8193]$&$0. 0625$&$0.0630$&$[0.0309,0.0941]$\\
\hline
$0.99$&$\textcolor{red}{0.4744}$&$0.2596$&$0.3965$\\
\hline
\end{tabular}
\caption{Estimation of $\alpha$-quantile contrast indices for different values of $\alpha$}\label{tab_vasi1_alpha_quan}
\end{table}
Contrary to Sobol indices, the sensitivity of our model parameters strongly depend on the confidence level $\alpha$. For $\alpha \in \left \{0.05, 0.1, 0.5, \right \}$, most of the uncertainty in the bond price is due to the long-term mean-reversion level $b$ and the speed of mean reversion $a$ whereas for $\alpha=0.9$ or $\alpha=0.99$, we can see that $a$ and $\sigma$ becomes more important than the the long-term mean-reversion level $b$ in terms of output uncertainty.
\end{example}
\section{Conclusion}\label{sec:con}
The main goal of this paper was to propose an estimation of the quantile oriented sensitivity indices. Our proposition if based on a rewriting of these indices using the Conditional Tail Expectation. This domain deserve much additional work in order to make the QOSA a useful and practical tool. In particular, their performance with respect to the procedure proposed in \cite{ioss} has to be studied. %In this paper we have provided results of consistency of quantile contrast index with respect to the excess wealth and dispersive orders. We have showed that such index can be   also linked with the Conditional Tail Expectation risk measure and  then we propose an empirical estimator for which we have provided convergence results.
%provided an estimator of such index and 
%\cite{kaplanmeier58,cox72,wedderburnnelder72,kalbprent73,breslow74,efron77,cleveland79}

%%%%%%%%%%%%%%%%%%%%%%%%%%%%%%%%%%%%%%%%%%%%%%%
%  Bibiliography.

\bibliographystyle{plain}  
%\clearpage
\bibliography{biblio_rachdi}

%%%%%%%%%%%%%%%%%%%%%%%%%%%%%%%%%%%%%%%%%%%%%%%
%  Appendix

%\appendix

%\input{chap-c-app}

%\onecolumn

%%%%%%%%%%%%%%%%%%%%%%%%%%%%%%%%%%%%%%%%%%%%%%%
%  ToC

%
%\clearpage
%\setcounter{tocdepth}{2}
%\tableofcontents 
% 

\end{document}